\theoremstyle{plain}
\newtheorem*{thm A}{Theorem~A}
\newtheorem*{thm B}{Theorem~B}
\newtheorem*{thm C}{Theorem~C}
\newtheorem*{pro A}{Proposition~A}
\newtheorem*{pro B}{Proposition~B}
\newtheorem*{main1}{Theorem~1}
\newtheorem*{main2}{Theorem~2}
\newtheorem*{main3}{Theorem~3}
\newtheorem{theorem}{Theorem}[section]
\newtheorem{corollary}[theorem]{Corollary}
\newtheorem{remark}[theorem]{Remark}
\newtheorem{lemma}[theorem]{Lemma}
\def \N{\nabla}
\def \GBt{G_2({\mathbb C}^{m+2})}
\def \GBo{G_2({\mathbb C}^{m+1})}
\def \EN{{\eta}_{\nu}}
\def \EoK{{\eta}_1({\xi})}
\def \EKo{{\eta}({\xi}_1)}
\def \ENK{{\eta}_{\nu}({\xi})}
\def \al{\alpha}
\def \x{\xi}
\def \p{\phi}
\def \X{X_{0}}
\def \e{\eta}
\def \xo{\xi_{1}}
\def \etw{\eta_{2}}
\def \eth{\eta_{3}}
\def \xtw{{\xi}_2}
\def \xth{{\xi}_3}
\def \po{\phi_{1}}
\def \xt{\xi_{2}}
\def \xh{\xi_{3}}
\def \E{\eta}
\def \Dc{{\mathfrak D}^{\bot}}
\def \QP{{\mathcal Q}^{\bot}}
\def \Q{\mathcal Q}
\def \SN{{\sum}_{{\nu}=1}^3}
\def \PN{{\phi}_{\nu}}
\def \Rx{R_{\xi}}
\def \RN{\Bar{R}_N}
\def \sms{semi-parallel shape operator}
\def \smsj{semi-parallel structure Jacobi operator}
\def \BE{\begin{equation}}
\def \EE{\end{equation}}
\def \BEN{\begin{equation*}}
\def \EEN{\end{equation*}}
\def \BSN{\begin{split}}
\def \ESN{\end{split}}
\begin{document}
\title[Semi-parallel symmetric operators]{Semi-parallel symmetric operators for Hopf hypersurfaces in complex two-plane Grassmannians}

\vspace{0.2in}

\author[D.H. Hwang, H. Lee, and C. Woo]{Doo Hyun Hwang, Hyunjin Lee, and Changhwa Woo}
\address{\newline
D.H. Hwang, H. Lee and C. Woo
\newline Department of Mathematics,
\newline Kyungpook National University,
\newline Daegu 702-701, REPUBLIC OF KOREA}
\email{engus0322@knu.ac.kr}
\email{lhjibis@hanmail.net}
\email{legalgwch@knu.ac.kr}

\footnotetext[1]{{\it 2010 Mathematics Subject Classification} : Primary 53C40; Secondary 53C15.}
\footnotetext[2]{{\it Key words} : Real hypersurfaces, complex
two-plane Grassmannians, Hopf hypersurface, semi-parallel shape operator, semi-parallel structure Jacobi operator, semi-parallel normal Jacobi operator.}
\thanks{* This work was supported by Grant Proj. No. NRF-2011-220-C00002 from National Research Foundation of Korea.
The second author by Grant Proj. No. NRF-2012-R1A1A3002031 and the third supported by NRF Grant funded by the Korean Government (NRF-2013-Fostering Core Leaders of Future Basic Science Program).}

\begin{abstract}
In this paper, we introduce new notions of semi-parallel shape operators and structure Jacobi operators in complex two-plane Grassmannians $G_2({\mathbb C}^{m+2})$. By using such a semi-parallel condition, we give a complete classification of Hopf hypersurfaces in $\GBt$.
\end{abstract}

\maketitle

\section*{Introduction}\label{section 0}
\setcounter{equation}{0}
\renewcommand{\theequation}{0.\arabic{equation}}
\vspace{0.13in}


The classification of real hypersurfaces in Hermitian symmetric space is one of interesting parts in the field of differential geometry. Among them, we introduce a complex two-plane Grassmannian~$\GBt$ defined by the set of all complex two-dimensional linear subspaces in ${\mathbb C}^{m+2}.$ It is a kind of Hermitian symmetric space of compact irreducible type with rank~$2$. Remarkably, the manifolds are equipped with both a K\"{a}hler structure $J$ and a quaternionic K\"{a}hler structure ${\mathfrak J}$ satisfying $JJ_{\nu}=J_{\nu}J$ $(\nu=1,2,3)$ where $J_{\nu}$ is an orthonormal basis of $\mathfrak J$. When $m=1$, $G_2({\Bbb C}^3)$ is isometric to the two-dimensional
complex projective space ${\mathbb C}P^2$ with constant holomorphic
sectional curvature eight. When $m=2$, we note that the isomorphism $\text{Spin}(6) \simeq \text{SU}(4)$ yields an isometry between $G_2({\Bbb C}^4)$ and the real Grassmann
manifold $G_2^+({\mathbb R}^6)$ of oriented two-dimensional linear
subspaces in ${\Bbb R}^6$. In this paper, we assume $m \geq 3$. (see Berndt and Suh~\cite{BS1} and~\cite{BS2}).

\vskip 3pt

Let $M$ be a real hypersurface in $\GBt$ and $N$ a local unit normal vector field of $M$. Since $\GBt$ has the K\"{a}hler structure $J$, we may define a {\it Reeb vector field} $\xi$ defined by $\xi =-JN$ and a 1-dimensional distribution $[\xi]=\text{Span}\{\,\xi\}$. The Reeb vector field~$\xi$ is said to be a {\it Hopf} if it is invariant under the shape operator $A$ of $M$. The 1-dimensional foliation of $M$ by the integral curves of $\xi$ is said to be a {\it Hopf foliation} of $M$. We say that $M$ is a {\it Hopf hypersurface} if and if the Hopf foliation of $M$ is totally geodesic. By the formulas in \cite[Section~$2$]{LS}, it can be easily checked that $\xi$ is Hopf if and only if $M$ is Hopf.

\vskip 3pt

From the quaternionic K\"{a}hler structure $\mathfrak J$ of $\GBt$, there naturally exists {\it almost contact 3-structure} vector field $\xi_{1},\xi_{2},\xi_{3}$ defined by $\xi_{\nu}=-J_{\nu}N$, $\nu=1,2,3$. Put $\QP = \text{Span}\{\,\xi_1, \xi_2, \xi_3\}$, which is a 3-dimensional distribution in a tangent vector space $T_{x}M$ of $M$ at $x \in M$. In addition, $\Q$ stands for the orthogonal complement of $\QP$ in $T_{x}M$. It becomes the quaternionic maximal subbundle of $T_{x}M$. Thus the tangent space of $M$ consists of the direct sum of $\Q$ and $\QP$ as follows: $T_{x}M =\Q\oplus \QP$.

\vskip 3pt

For two distributions $[\xi]$ and $\QP$ defined above, we may consider two natural invariant geometric properties under the shape operator $A$ of $M$, that is, $A [\xi] \subset [\xi]$ and $A\QP \subset \QP$. By using the result of Alekseevskii \cite{Al-01}, Berndt and Suh~\cite{BS1} have classified all real hypersurfaces with two natural invariant properties in $\GBt$ as follows:
\begin{thm A}\label{theorem A}
Let $M$ be a connected real hypersurface in $\GBt$,
$m \geq 3$. Then both $[\xi]$ and $\QP$ are invariant
under the shape operator of $M$ if and only if
 \begin{enumerate}[\rm(A)]
\item {$M$ is an open part of a tube around a totally geodesic
$\GBo$ in $\GBt$, or} \item {$m$ is
even, say $m = 2n$, and $M$ is an open part of a tube around a
totally geodesic ${\mathbb H}P^n$ in $\GBt$}.
\end{enumerate}
 \end{thm A}

\noindent In the case~(A), we call $M$ is a real hypersurface of Type~$(A)$ in $\GBt$. Similarly in the case~(B) we call $M$ one of Type~$(B)$. Using Theorem~$\rm A$, many geometricians have given some characterizations for Hopf hypersurfaces in $\GBt$ with geometric quantities, for example, shape operator, normal (or structure) Jacobi operator, Ricci tensor, and so on. In particular, Lee and Suh~\cite{LS} gave a characterization for real hypersurfaces of Type~$(B)$ as follows:
\begin{thm B}\label{theorem B}
Let $M$ be a connected orientable Hopf hypersurface in $\GBt$, $m
\geq 3$. Then the Reeb vector field $\xi$ belongs to the
distribution $\Q$ if and only if $M$ is locally congruent
to an open part of a tube around a totally geodesic ${\mathbb H}P^n$
in $\GBt$, $m=2n$, where the distribution
$\Q$ denotes the orthogonal complement of $\QP$ in $T_{x}M$, $x \in M$. In other words, $M$ is locally congruent to real hypersurfaces of Type~$(B)$.
\end{thm B}
\vskip 3pt

On the other hand, regarding the parallelism of tensor field $T$ of type $(1,1)$, that is, $\N T =0$, on $M$ in $\GBt$, $m \geq 3$, there are many well-known results. Among them, when $T=A$ where $A$ denotes the shape operator of $M$, some geometricians have verified non-existence properties and some characterizations for the shape operator $A$ with many kinds of parallelisms, such as Levi-civita parallel, $ \mathfrak F$-parallel, $\QP$-parallel, Reeb parallel or generalized Tanaka-Webster parallel, and so on (see ~\cite{JLS}, \cite{LCW}, \cite{Suh01}, \cite{Suh02}, etc.).

\vskip 3pt

Furthermore, many geometricians considered such a parallelism for another tensor field of type (1,1) on $M$, namely, the Jacobi operator $R_{X}$ defined $(R_{X}(Y))(p)= (R(Y, X)X)(p)$, where $R$ denotes a Riemannian curvature tensor of type (1,3) on~$M$ and $X$, $Y$ denote tangent vector fields on $M$. Clearly, each tangent vector field~$X$ to $M$ provides the Jacobi operator $R_{X}$ with respect to $X$. When it comes to $X=\xi$, the Jacobi operator $R_\xi$ is said to be a {\it structure Jacobi operator}. Related to the tensor field $R_{\x}$ of type (1,1) on $M$, P\'erez, Jeong, and Suh~\cite{JPS} considered the parallelism, that is, ${\nabla}_{X}R_{\xi}=0$ for any $X \in TM$ and obtained a non-existence property.

\vskip 3pt

In this paper we consider a generalized notion for parallelism of tensor field of type (1,1) on $M$ in $\GBt$, namely, semi-parallelism. Actually, in \cite{ChoKi} a tensor field $F$ of type $(1,s)$ on a Riemannian manifold is said to be {\it semi parallel} if $R \cdot F=0$. It means that the Riemannian curvature tensor $R$ of~$M$ acts as a derivation on $F$. From this, it is natural that if a tensor field $T$ of type (1,1) is parallel, then $T$ is said to be a {\it semi-parallel}. Geometricians have proved various results concerning the semi-parallelism conditions of real hypersurfaces in complex space form (see \cite{ChoKi}, \cite{NieRyan}, \cite{PeSan}). Recently, K. Panagiotidou and M.M. Tripathi suggested the notion of {\it semi-parallel normal Jacobi operator} for a real hypersurface in $\GBt$ (see~\cite{PT}).

\vskip 3pt

Motivated by these works, we consider semi-parallelisms of the shape operator and the structure Jacobi operator for real hypersurfaces in $\GBt$, and assert the following theorems, respectively:
\begin{main1}
Let $M$ be a connected real hypersurface in complex two-plane Grassmannians $\GBt$, $m \geq 3$.
There does not exist  Hopf hypersurfaces $M$ with \sms \, if the smooth function $\al=g(A\x, \x)$ is constant along the direction of $\xi$.
\end{main1}
\vskip 3pt
\begin{main2}
Let $M$ be a connected real hypersurface in complex two-plane Grassmannians $\GBt$, $m \geq 3$.
There does not exist  Hopf hypersurfaces $M$ with \smsj \, if the smooth function $\al=g(A\x, \x)$ is constant along the direction of $\xi$.
\end{main2}

\vskip 3pt

In \cite{PT}, K. Panagiotidou and M.M. Tripathi proved the following
\begin{thm C}
There does not exist any connected Hopf hypersurface in complex two-plane Grassmannians $\GBt$, $m \geq 3$, with semi-parallel normal Jacobi operator if the smooth function $\al=g(A\x, \x)\neq 0$ and $\Q$- or $\QP$-component of $\x$ is invariant by the shape operator.
\end{thm C}

\noindent From this we consider that $M$ has a vanishing geodesic Reeb flow when it comes to normal Jacobi operator. Hence by virtue of \cite[Lemma 3.1]{LKS02}, it gives us a extended result with respect to Theorem~$\rm C$ as follows.
\begin{main3}\label{main3}
Let $M$ be a connected real hypersurface in complex two-plane Grassmannians $\GBt$, $m \geq 3$. There does not exist
Hopf hypersurfaces $M$ with normal Jacobi operator if the smooth function $\al=g(A\x, \x)$ is constant along the direction of $\xi$.
\end{main3}

\vskip 3pt
In this paper, we refer \cite{Al-01}, \cite{BS1}, \cite{BS2}, \cite{LS} and \cite{JMPS}, \cite{Suh01}, \cite{Suh02} for Riemannian geometric structures of $\GBt$ and its geometric quantities, respectively.

\vspace{0.15in}

\section{Semi-parallel shape operator}\label{section 1}

\setcounter{equation}{0}
\renewcommand{\theequation}{1.\arabic{equation}}
\vspace{0.13in}
In this section, let $M$ represent a Hopf real hypersurface in $\GBt$, $m \geq 3$, and~$R$ denote the Riemannian curvature tensor of $M$. Hereafter unless otherwise stated, we consider that $X,Y$, and $Z$ are any tangent vector field on $M$. Let $W$ be any tangent vector field on $\Q$.

\vskip 3pt

We first give the fundamental equation for the semi-parallelism of a tensor field $T$ of type (1,1) on $M$ and prove our Theorem~$1$.

\vskip 3pt

As mentioned in the introduction, a tensor field $T$ on $M$ is said to be semi parallel, if $T$ satisfies $R \cdot T=0$. It is equal to
\begin{equation*}\label{mcon}
(R(X,Y)T)Z=0.
\tag {\dag}
\end{equation*}
Since $(R(X,Y)T)Z=R(X,Y)(TZ)-T(R(X,Y)Z)$, the equation~\eqref{mcon} is equivalent to the following
\BE\label{mcon 02}
R(X,Y)(TZ)=T(R(X,Y)Z).
\tag{\ddag}
\EE

\vskip 5pt

Using this discussion, let us prove our Theorem~$1$ given in Introduction. In order to do this, suppose that $M$ has the semi-parallel shape operator, that is, the shape operator~$A$ of $M$ satisfies the condition $(R(X,Y)A)Z=0$. From the relation between \eqref{mcon} and \eqref{mcon 02}, we see that the given condition is equivalent to
\begin{equation}\label{eq: 3.2}
R(X,Y)(AZ)=A(R(X,Y)Z).
\end{equation}
Therefore from \cite[The equation of Gauss]{Suh01}, it becomes
\begin{equation}\label{eq: 3.3}
\begin{split}
& \quad \ g(Y,AZ)X - g(X,AZ)Y + g(\phi Y,AZ)\phi X - g(\phi X, AZ)\phi Y  \\
& \quad \ -2g(\phi X, Y)\phi AZ +g(AY,AZ)AX -g(AX, AZ)AY \\
& \quad \ + \ \sum_{\nu} \Big \{g(\phi_{\nu}Y, AZ) \phi_{\nu}X - g(\phi_{\nu}X, AZ) \phi_{\nu}Y -2g(\phi_{\nu}X, Y) \phi_{\nu} AZ \Big\}\\
& \quad \ + \  \sum_{\nu}\Big \{g(\phi_{\nu}\phi Y, AZ) \phi_{\nu}\phi X - g(\phi_{\nu}\phi X, AZ) \phi_{\nu}\phi Y \Big\}\\
& \quad \ - \  \sum_{\nu}\Big\{\eta(Y) \eta_{\nu}(AZ) \phi_{\nu}\phi X -\eta(X) \eta_{\nu}(AZ) \phi_{\nu}\phi Y \Big\}\\
& \quad \ -\sum_{\nu} \Big\{\eta(X) g(\phi_{\nu}\phi Y, AZ) - \eta(Y) g(\phi_{\nu}\phi X, AZ)\Big\}\xi_{\nu}\\
& = g(Y,Z)AX - g(X,Z)AY + g(\phi Y,Z)A\phi X - g(\phi X, Z)A\phi Y  \\
& \quad \ -2g(\phi X, Y)A\phi Z +g(AY,Z)A^{2}X -g(AX, Z)A^{2}Y \\
& \quad \  + \ \sum_{\nu} \Big \{g(\phi_{\nu}Y, Z) A\phi_{\nu}X - g(\phi_{\nu}X, Z) A\phi_{\nu}Y -2g(\phi_{\nu}X, Y) A\phi_{\nu} Z \Big\}\\
& \quad \  + \  \sum_{\nu}\Big \{g(\phi_{\nu}\phi Y, Z) A\phi_{\nu}\phi X - g(\phi_{\nu}\phi X, Z) A\phi_{\nu}\phi Y \Big\}\\
& \quad \  - \  \sum_{\nu}\Big\{\eta(Y) \eta_{\nu}(Z) A\phi_{\nu}\phi X -\eta(X) \eta_{\nu}(Z) A\phi_{\nu}\phi Y \Big\}\\
& \quad \ -\sum_{\nu} \Big\{\eta(X) g(\phi_{\nu}\phi Y, Z) - \eta(Y) g(\phi_{\nu}\phi X, Z)\Big\}A\xi_{\nu},\\
\end{split}
\end{equation}
where $\sum_{\nu}$ moves from $\nu=1$ to $\nu=3$.

\def \sn{\sum_{\nu}}

Putting $Y=Z=\x$ and using the condition of Hopf, the equation~\eqref{eq: 3.3} can be reduced to
\begin{equation}\label{eq: 3.4}
\begin{split}
& \ \ AX +\al A^{2}X  \\
& \quad \ - \sn \Big\{ \big(\EN (X) - \eta (X) \EN (\xi)\big) A\xi_{\nu} + 3\EN
(\phi X) A\PN \xi + \EN(\xi) A\PN \phi X \Big\}\\
& = \al X +\al^{2} AX  \\
\end{split}
\end{equation}
\begin{equation*}
\begin{split}
&\quad \ - \al\sn \Big\{ \big(\EN (X) - \eta (X) \EN (\xi)\big) \xi_{\nu} + 3\EN
(\phi X) \PN \xi + \EN(\xi) \PN \phi X \Big\}.
\end{split}
\end{equation*}

Our first purpose is to show that  $\x$ belongs to either $\Q$ or $\QP$.
\begin{lemma}\label{lemma 3.1}
Let $M$ be a Hopf hypersurface with \sms \ in $\GBt$, $m \geq 3$. If the principal curvature $\alpha=g(A\x, \x)$ is constant along the direction of Reeb vector field~$\xi$,
then $\xi$ belongs to either the distribution $\Q$ or the distribution $\QP$.
\end{lemma}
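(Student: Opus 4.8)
The plan is to exploit the fully contracted semi-parallel identity \eqref{eq: 3.4}, after rewriting it in a more transparent operator form. Collecting the three $A$-summands on the left as $A$ applied to a single vector, and doing the same on the right, \eqref{eq: 3.4} becomes
\[
(1-\alpha^{2})AX + \alpha A^{2}X - \alpha X = (A-\alpha I)\,S(X),
\]
where $S(X)=\sum_{\nu}\{(\eta_{\nu}(X)-\eta(X)\eta_{\nu}(\xi))\xi_{\nu}+3\eta_{\nu}(\phi X)\phi_{\nu}\xi+\eta_{\nu}(\xi)\phi_{\nu}\phi X\}$. I would then fix the canonical basis $\{J_{1},J_{2},J_{3}\}$ of $\mathfrak{J}$ adapted to $\xi$, so that $\xi=\eta(X_0)X_0+\eta(\xi_1)\xi_1$ with $X_0\in\Q$ a unit vector and $\eta_{2}(\xi)=\eta_{3}(\xi)=0$, $\eta_{1}(\xi)=\eta(\xi_1)$. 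The goal is to show $\eta(X_0)\eta(\xi_1)=0$, which is precisely the assertion that $\xi\in\Q$ or $\xi\in\QP$.

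The main computation is to feed the two natural vectors $X=\xi_1$ and $X=X_0$ into the reorganized identity. Using $\phi\xi_1=\phi_1\xi=\eta(X_0)\phi_1 X_0$, $\phi_1^{2}X_0=-X_0$, and the structure identity $\phi_{\nu}\phi X=\phi\phi_{\nu}X+\eta(X)\xi_{\nu}-\eta_{\nu}(X)\xi$, one finds the clean evaluations
\[
S(\xi_1)=\eta(X_0)\big(\eta(X_0)\xi_1-\eta(\xi_1)X_0\big),\qquad S(X_0)=\eta(\xi_1)\,\phi\phi_1 X_0 ,
\]
where $S(\xi_1)$ lies in $\mathrm{span}\{X_0,\xi_1\}$ and $S(X_0)\in\Q$, and each carries an explicit factor $\eta(X_0)$ or $\eta(\xi_1)$. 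Substituting these back and projecting the tensorial equation onto $\xi_1$, $X_0$, $\phi_1 X_0$, $\xi_2$, $\xi_3$ converts it into a system of scalar relations among $g(A\xi_1,\xi_1)$, $g(AX_0,\xi_1)$, $g(AX_0,X_0)$, and similar entries. Here I would use the Hopf relation $A\xi=\alpha\xi$, which after inserting the decomposition yields $\eta(X_0)AX_0+\eta(\xi_1)A\xi_1=\alpha\xi$, together with the symmetry of $A$ and $A^{2}$ to pair off the cross terms $g(A^{2}\xi_1,X_0)=g(A^{2}X_0,\xi_1)$.

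The hard part is that $A$ acts in an a priori unknown way on $\QP$ and on $X_0$, so the projections of $A\xi_1$, $AX_0$, and especially of $A^{2}\xi_1$ keep generating new unknown components along $\phi_1 X_0$, $\xi_2$, $\xi_3$ that do not close among themselves. To eliminate them I would bring in the hypothesis that $\alpha$ is constant along $\xi$: for a Hopf hypersurface the gradient of $\alpha$ obeys a Berndt--Suh relation of the form $\mathrm{grad}\,\alpha=(\xi\alpha)\xi-4\sum_{\nu}\eta_{\nu}(\xi)\phi_{\nu}\xi$, so $\xi\alpha=0$ forces $\mathrm{grad}\,\alpha=-4\eta(\xi_1)\eta(X_0)\phi_1 X_0$, pinning down the only surviving off-diagonal direction. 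Feeding this, together with the Codazzi-type identities available for Hopf hypersurfaces, back into the scalar system should collapse it to a single relation of the form $\eta(X_0)\eta(\xi_1)\cdot c=0$ with $c$ a nonvanishing expression in $\alpha$ and $\eta(\xi_1)$; hence $\eta(X_0)\eta(\xi_1)=0$, so $\xi$ lies in $\Q$ or in $\QP$. I expect the bookkeeping of these off-diagonal components, rather than any single conceptual step, to be the principal obstacle.
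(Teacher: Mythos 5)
Your plan has a genuine gap, and the specific route you choose can be shown not to produce the contradiction you need. The decisive ingredient you are missing is the known consequence of the hypothesis $\xi\alpha=0$: by \cite[Equation~$(2.10)$]{JMPS}, once $\xi=\eta(X_0)X_0+\eta(\xi_1)\xi_1$ with $\eta(X_0)\eta(\xi_1)\neq 0$, the vectors $X_0$ and $\xi_1$ are themselves principal with $AX_0=\alpha X_0$ and $A\xi_1=\alpha\xi_1$. You instead hope that the gradient formula for $\alpha$ plus unspecified ``Codazzi-type identities'' will close your scalar system; that is exactly the step you never carry out, and it is where the proof lives. Worse, once $AX_0=\alpha X_0$ and $A\xi_1=\alpha\xi_1$ are in hand, your two test vectors give nothing: with $Y=Z=\xi$ the identity \eqref{eq: 3.4} applied to a principal vector $X$ with $AX=\alpha X$ reduces to $(A-\alpha I)S(X)=0$, and your own (correct) evaluations $S(\xi_1)=\eta(X_0)\bigl(\eta(X_0)\xi_1-\eta(\xi_1)X_0\bigr)$ and $S(X_0)=\eta^2(\xi_1)X_0-\eta(X_0)\eta(\xi_1)\xi_1$ both lie in $\mathrm{span}\{X_0,\xi_1\}$, which is contained in the $\alpha$-eigenspace. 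So substituting $X=\xi_1$ or $X=X_0$ yields identities that hold automatically and carry no information about $\eta(X_0)\eta(\xi_1)$.

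The paper's proof avoids this dead end by testing against a vector with a \emph{different} principal curvature: after splitting off the case $\alpha=0$ (handled directly by the gradient formula \eqref{eq: 1.4}), it substitutes $X=\phi X_0$ into \eqref{eq: 3.4} and invokes \cite[Lemma~$2.2$]{JMPS} to see that $\phi X_0$ and $\phi_1 X_0$ are principal with curvature $k=(\alpha^2+4\eta^2(X_0))/\alpha\neq\alpha$. The mismatch between $k$ and $\alpha$ is precisely what turns the identity into the nontrivial scalar relation $4\eta^{2}(X_0)\bigl(8\eta^{2}(X_0)+\alpha^{2}\bigr)\phi X_0=0$, forcing $\phi X_0=0$ and hence a contradiction. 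To repair your argument you would need both to import the eigenvector statements $AX_0=\alpha X_0$, $A\xi_1=\alpha\xi_1$ and to change your test vector to one (such as $\phi X_0$) whose principal curvature differs from $\alpha$; as written, the proposal cannot reach the conclusion.
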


\begin{proof}
\noindent We consider that $\xi$ satisfies
\begin{equation*}\label{xi}
\xi = \eta(X_{0})X_{0}+\eta(\xi_{1})\xi_{1}
\tag{*}
\end{equation*}
for some unit vectors $X_{0} \in \Q$, $\xi_{1} \in \QP$, and $\eta(X_{0})\eta(\xi_{1})\neq 0$.

By virtue of \cite[Equation~$(2.10)$]{JMPS} and the assumption of $\xi \alpha =0$, we get $A \X = \alpha \X$ and $A \xo = \alpha \xo$.

\vskip 3pt

In the case of $\alpha=0$, using the equation in \cite[Lemma~$1$]{BS1},
\begin{equation}\label{eq: 1.4}
Y{\alpha}=({\xi}{\alpha}){\eta}(Y)-4{\SN}{\ENK}{\EN}({\phi}Y),
\end{equation}
we obtain that $\xi$ belongs to either $\Q$ or $\QP$. We next consider the case~$\alpha \neq 0$.

\vskip 3pt
We next consider the case~$\alpha \neq 0$.

\noindent Substituting $X=\p\X$ in \eqref{eq: 3.4} and using basic formulas including \eqref{xi}, we get
\BE
\begin{split}\label{eq: 3.5}
&A\p\X-3\e(\X)\EoK A\po\x+\EoK A\po\X-\EoK\e(\X)A\po\x+\al A^{2}\p\X \\
&=\al\p\X-3\al\e(\X)\EoK \po\x+\al\EoK \po\X-\al\EoK\e(\X)\po\x+\al^{2} A\p\X.
\end{split}
\EE

\noindent From \eqref{xi} and $\p \x =0$, we obtain that $\po\x=\e(\X)\po\X$ and $\p\X=-\e(\xo)\po\X$. In addition, substituting $X$ by $X_0$ into \cite[Lemma~$2.2$]{JMPS} and applying $A\X =\alpha \X$, we see that both vector fields $\p\X$ and $\po \X$ are principal with same corresponding principal curvature $k=\frac{\al^{2}+4\e^{2}(\X)}{\al}$.
 From this, \eqref{eq: 3.5} gives
\BEN
-4k\e^{2}(\X)\p\X+\al k^{2}\p\X-4\al\e^{2}(\X)\p\X-\al^{2} k\p\X=0.
\EEN
Since $\al \neq 0$, multiplying $\al$ to this equation, we obtain
\BEN
4\e^{2}(\X)(8\e^{2}(\X)+\al^{2})\p\X=0.
\EEN
By our assumptions, we get $\e(\X)\e(\xo)\neq 0$ which means ${\phi}X_0=0$. This makes a contradiction. Accordingly, we get a
complete proof of our Lemma.
\end{proof}

From Lemma~\ref{lemma 3.1}, we only have two cases, $\xi\in\Q$ or $\xi\in\QP$, under our assumptions. Next we further study the case $\x \in \QP$.

\begin{lemma}\label{lemma 3.2}
Let $M$ be a Hopf hypersurface with \sms \ in $\GBt$, $m \geq 3$. If the Reeb vector field $\xi$ belongs to the distribution $\QP$, then~$M$ must be a $\QP$-invariant hypersurface.
\end{lemma}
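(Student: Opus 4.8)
The plan is to use the semi-parallel hypothesis in the form \eqref{mcon 02}, namely $R(X,Y)(AZ)=A(R(X,Y)Z)$, with the single specialization $Z=\xi$. Since $\xi\in\QP$, I first rotate the almost contact $3$-structure so that $\xi=\xi_1$; then $\eta=\eta_1$, $\eta_1(\xi)=1$, $\eta_2(\xi)=\eta_3(\xi)=0$, and the Hopf condition is $A\xi=\alpha\xi$. Because $A$ is symmetric, the desired conclusion $A\QP\subseteq\QP$ is equivalent to $A\Q\subseteq\Q$, i.e. to $A\xi_2,\,A\xi_3\in\QP$; thus it suffices to annihilate the $\Q$-components of $A\xi_2$ and $A\xi_3$.

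The device that makes this clean is as follows. Putting $Z=\xi$ in \eqref{mcon 02} and using $A\xi=\alpha\xi$ together with the $C^\infty$-linearity of $R(X,Y)$ in its last slot gives $R(X,Y)(\alpha\xi)=\alpha R(X,Y)\xi$, whence
\begin{equation*}
(A-\alpha I)\,R(X,Y)\xi=0\qquad\text{for all }X,Y.
\end{equation*}
(Notice that no hypothesis on $\xi\alpha$ enters here, matching the statement.) So every vector $R(X,Y)\xi$ is principal with curvature $\alpha$, and the task becomes to compute $R(X,Y)\xi$ from the equation of Gauss for a well-chosen pair $X,Y$.

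The decisive choice is $X=W$, $Y=W'$ with $W,W'\in\Q$. Inserting $Z=\xi_1$ into the Gauss expression appearing in \eqref{eq: 3.3}, every term carrying a factor $g(\,\cdot\,,\xi_1)$ or $\eta(\,\cdot\,)$ dies, since $W,W',\phi W,\phi_\nu W,\phi_\nu\phi W$ all lie in $\Q$ and are orthogonal to $\QP$, while the factor $\phi\xi_1=0$ kills the remaining non-quaternionic term; the only survivor is $-2\sum_\nu g(\phi_\nu W,W')\phi_\nu\xi_1$, and with $\phi_1\xi_1=0$, $\phi_2\xi_1=-\xi_3$, $\phi_3\xi_1=\xi_2$ this reduces to
\begin{equation*}
R(W,W')\xi=2g(\phi_2 W,W')\,\xi_3-2g(\phi_3 W,W')\,\xi_2 .
\end{equation*}
Feeding this into the principal condition gives $g(\phi_2 W,W')(A-\alpha I)\xi_3-g(\phi_3 W,W')(A-\alpha I)\xi_2=0$ for all $W,W'\in\Q$. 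For fixed $W\neq0$ the pair $\bigl(g(\phi_2 W,W'),g(\phi_3 W,W')\bigr)$ sweeps out all of ${\mathbb R}^2$ as $W'$ varies, because $\phi_2W=J_2W$ and $\phi_3W=J_3W$ are linearly independent (the $J_\nu$ act as imaginary quaternions); separating the two coefficients forces $A\xi_2=\alpha\xi_2$ and $A\xi_3=\alpha\xi_3$. In particular $A\QP\subseteq\QP$, which is the claim (indeed the stronger statement that $\QP$ is $\alpha$-principal).

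I expect the main obstacle to be the reduction of $R(W,W')\xi$: the Gauss curvature of $\GBt$ carries many quaternionic terms, and one must verify that each of them drops out, relying throughout on the invariance of $\Q$ under $J$ and every $J_\nu$ and on the normalization $\xi=\xi_1$. A secondary delicate point is the separation step, which converts the single relation ``$R(W,W')\xi$ is $\alpha$-principal'' into the two independent identities for $\xi_2$ and $\xi_3$; this requires the genuine independence of $\phi_2W$ and $\phi_3W$. By contrast, attempting to prove the lemma from the reduced Hopf equation \eqref{eq: 3.4} runs into the quadratic term $\alpha A^2X$, whose second-order nature couples the $\Q$- and $\QP$-parts of $A\xi_2$ and $A\xi_3$ and so leaves their $\Q$-components undetermined; the point of the substitution $Z=\xi$ above is precisely to bypass that coupling.
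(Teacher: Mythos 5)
Your proposal is correct, but it takes a genuinely different route from the paper's proof. The paper works from the reduced identity~\eqref{eq: 3.4} (the specialization $Y=Z=\xi$), substitutes $X=\xi_2,\xi_3$ to obtain quadratic relations for $A^{2}\xi_2$ and $A^{2}\xi_3$ (splitting into the cases $\al=0$ and $\al\neq 0$), and then combines these with the auxiliary identity $A\phi W=A\phi_1 W$ --- obtained by covariantly differentiating $\xi=\xi_1$ --- inside the full Gauss identity~\eqref{eq: 3.3} with $X=\xi_2$, $Y=\xi_3$, $Z=W\in\Q$, concluding $\eta_2(AW)=\eta_3(AW)=0$. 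You instead specialize only $Z=\xi$, use the Hopf condition and the tensoriality of $R(X,Y)$ to get $(A-\al I)R(X,Y)\xi=0$ for all $X,Y$, and then compute $R(W,W')\xi$ for $W,W'\in\Q$ from the Gauss equation; your term-by-term reduction to $2g(\phi_2W,W')\xi_3-2g(\phi_3W,W')\xi_2$ checks out (every discarded term carries a factor $g(\,\cdot\,,\xi_1)$ with the first argument in $\Q$, or $\eta(W)$, $\eta(W')$, or $\phi\xi_1=\phi_1\xi_1=0$), and the separation step via $W'=\phi_2W$ and $W'=\phi_3W$ is legitimate because $\Q$ is invariant under each $J_\nu$, so these choices lie in $\Q$ and give the pairs $(|W|^2,0)$ and $(0,|W|^2)$. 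Your approach buys three things: it avoids the case split on $\al$, it dispenses with the covariant derivative of $\xi_1$ altogether, and it yields the strictly stronger conclusion $A\xi_\nu=\al\,\xi_\nu$ for $\nu=1,2,3$ --- which would in fact already rule out the Type~$(A)$ model spaces at this stage, since there $A\xi_2=\beta\xi_2$ with $\beta\neq\al$. The paper's weaker conclusion $g(A\Q,\QP)=0$ is all that is needed to invoke Theorem~$\rm A$, so both arguments serve the remainder of the proof of Theorem~$1$ equally well.
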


\begin{proof}
Since $\x \in \QP$, we may put $\xi=\xi_{1}\in \QP$ for the sake of our convenience.
Differentiating $\xi=\xi_1$ along any direction $X \in TM$ and using fundamental formulae in \cite[Section~$2$]{LS},
 it gives us
\begin{equation}\label{eq: 2.7}
\phi AX=2\eta_{3}(AX)\xi_{2}-2\eta_{2}(AX)\xi_{3}+\phi_{1} AX.
\end{equation}
Taking the inner product of \eqref{eq: 2.7} with $W \in \Q$ and taking symmetric part, we also have
\BE\label{eq: 3.16}
A\p W=A\po W.
\EE

Putting $X=\xtw$ and $X=\xth$  into \eqref{eq: 3.4}, we get, respectively,
\begin{equation*}\label{eq: 3.9}
\left \{
\begin{aligned}
2A\xtw+\al A^{2}\xtw & = 2\al\xtw+\al^{2}A\xtw, \\
2A\xth+\al A^{2}\xth & = 2\al\xth+\al^{2}A\xth.
\end{aligned}
\right.
\end{equation*}

For $\al=0$, clearly $\QP$ is invariant under the shape operator, i.e., $A\QP \subset \QP$.
Thus, let us consider $\al \neq 0$. Then the previous equations imply that
\begin{equation}\label{eq: 3.11}
\left \{
\begin{aligned}
A^{2}\xtw & = \frac{\al^{2}-2}{\al}A\xtw+2\xtw, \\
A^{2}\xth & = \frac{\al^{2}-2}{\al}A\xth+2\xth.
\end{aligned}
\right.
\end{equation}
Moreover, restricting $X=\xtw$, $Y=\xth$ and putting $Z=W \in \Q$, the equation~\eqref{eq: 3.3} becomes
\begin{equation*}\label{eq: 3.13}
\begin{split}
& 4\eth(AW)\xtw-4\etw(AW)\xth+2\p AW-2\po AW +\eth(A^{2}W)A\xtw-\etw(A^{2}W)A\xth \\
& \quad = 2A\p W-2A\po W+\eth(AW)A^{2}\xtw-\etw(AW)A^{2}\xth.
\end{split}
\end{equation*}
Applying \eqref{eq: 2.7}, \eqref{eq: 3.16} and \eqref{eq: 3.11} to this equation, it follows $ \eth(AW)\xtw=\etw(AW)\xth$. This means $\eth(AW)=\etw(AW)=0$ for any tangent $W\in\Q$. It completes the proof.
\end{proof}

From this lemma, we see that $M$ satisfying the assumptions in Lemma~\ref{lemma 3.2} is locally congruent to a model space of Type~$(A)$ in $\GBt$. Now, if we assume $\x \in \Q$, then $M$
with semi-parallel shape operator is locally congruent to one of Type~$(B)$ by virtue of Theorem~$\rm B$.

\vskip 3pt

Summing up these discussions, we conclude: \emph{let $M$ be a Hopf hypersurface in $\GBt$, $m \geq 3$. If $M$ satisfies \eqref{eq: 3.2} and $\x \al = 0$, then $M$ must be a model space of Type~$(A)$ or $(B)$.}

\vskip 3pt

Hereafter, let us check whether the shape operator of a model space of Type~$(A)$ (or one of Type~$(B)$) satisfies the semi-parallel condition~\eqref{eq: 3.2} by
\cite[Proposition~$\rm 3$]{BS1} (or \cite[Proposition~$\rm 2$]{BS1}, respectively).

\vskip 3pt

Let $M_{A}$ be a model space of Type~$(A)$ in $\GBt$. To show our purpose, we suppose that $M_{A}$ has the semi-parallel shape operator. From ~\eqref{eq: 3.4}, \cite[Proposition~$\rm 3$]{BS1}, and $\xi \in \QP$, we have
\begin{equation*}
(\lambda-\al)(2+\al\lambda)X=0
\end{equation*}
for any tangent vector $X \in T_{\lambda}=\{X \in T_{x}M|\, X \bot \x_{\nu}, \ \p X = \po X, \ x \in M \}$. Since
$\al = \sqrt{8} \cot\sqrt{8}r$ and $\lambda= -\sqrt{2}\tan\sqrt{2}r$ where $r \in (0, \pi / \sqrt{8})$,
it implies that every $X \in T_{\lambda}$ is a zero vector. This gives rise to a contradiction.
In fact, the dimension of the eigenspace $T_{\lambda}$ is $2m-2$ where $m \geq 3$.

\vskip 3pt

Now let us consider our problem for a model space of Type~$(B)$ denoted by~$M_{B}$. Similarly, we assume that the shape operator of $M_{B}$ is semi-parallel.
By virtue of \cite[Proposition~$\rm 2$]{BS1}, we see that $\x$ of $M_{B}$ belongs to $\Q$.
Therefore we obtain $\al\beta(\al - \beta) \xo = 0$, if we put $X$ as a unit vector field $\xo \in T_{\beta}$ into \eqref{eq: 3.4}.
 As we know $\alpha = -2 \tan(2r)$, $\beta=2 \cot (2r)$ where $r \in (0, \pi/4)$ on~$M_{B}$, we get a contradiction.
 This completes the proof of our Theorem~$1$.

\noindent Therefore we assert:
\begin{remark}\label{remark 1}
\rm The shape operator $A$ of a model space of Type~$(A)$ nor Type~$(B)$ in
$\GBt$ does not satisfy the semi-parallelism condition.
\end{remark}
\vskip 5pt
Summing up these discussions, we complete the proof of our Theorem~$1$ given in the introduction.\hspace{9.8cm}$\Box$

\vspace {0.15in}

\section{Semi-parallel structure Jacobi operator}\label{section 2}
\setcounter{equation}{0}
\renewcommand{\theequation}{2.\arabic{equation}}
\vspace{0.13in}

In this section, we give a complete prove our Theorem~$2$.
\noindent Suppose the structure Jacobi operator of $M$ has semi-parallelism, that is, $M$ satisfies the condition $(R(X,Y)\Rx) Z=0$. Besides, from the relation between \eqref{mcon} and \eqref{mcon 02} we see that the given condition is equivalent to
\begin{equation}\label{eq 4.1}
R(X,Y)(\Rx Z)=\Rx(R(X,Y)Z).
\end{equation}
The structure Jacobi operator $R_{\xi}$ is defined by $R_{\xi}(X) = R(X,{\xi}){\xi}$, where $R$ denotes the Riemannian curvature tensor on $M$. Then from the Gauss equation, it can be written as
\begin{equation}\label{eq: 4.1}
\begin{split}
R_{\xi} X  & = X - \eta(X) \xi + \eta(A\xi) AX - \eta(AX) A\xi\\
             & \quad - \sn \Big\{ \big(\EN (X) - \eta (X) \EN (\xi)\big) \xi_{\nu} + 3\EN
(\phi X) \PN \xi + \EN(\xi) \PN \phi X \Big\},\\
\end{split}
\end{equation}
where $\sn$ denotes from $\nu=1$ to $\nu=3$. From this, we see that $R_{\xi}\xi=0$.

\vskip 3pt

Put $Y=Z=\x$ into \eqref{eq 4.1}, due to $\Rx \x = 0$, we get:
\begin{equation}\label{eq: 4.2}
\Rx(\Rx X )=0.
\end{equation}
Using these observation from now on we show that $\x$ belongs to either $\Q$ or its
orthogonal complement $\QP$ such that $TM=\Q \oplus \QP$.
\begin{lemma}\label{lemma 4.1}
Let $M$ be a Hopf hypersurface in $\GBt$, $m \geq 3$, with \smsj. If the principal curvature $\alpha=g(A\x, \x)$ is constant along the direction of $\xi$, then $\xi$ belongs to either the distribution $\Q$ or the distribution~$\QP$.
\end{lemma}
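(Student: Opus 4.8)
The plan is to mirror the argument of Lemma~\ref{lemma 3.1}, the role played there by the Gauss equation for $A$ now being taken over by the pointwise identity produced by \eqref{eq: 4.2}. First I would use that the structure Jacobi operator is self-adjoint, $g(\Rx X,Y)=g(\Rx Y,X)$, which follows at once from the standard symmetries of the curvature tensor. Hence \eqref{eq: 4.2} gives, for every $X$, the chain $0=g(\Rx(\Rx X),X)=g(\Rx X,\Rx X)$, so that $\Rx X=0$ for all $X$; in other words, the semi-parallel condition already forces $\Rx=0$. (One could instead keep working with $\Rx^{2}=0$, but reducing to $\Rx=0$ streamlines what follows.) Substituting $\Rx=0$ into the explicit formula \eqref{eq: 4.1} and using the Hopf relations $A\x=\al\x$ and $\e(AX)=\al\e(X)$, I obtain a single identity that every tangent vector $X$ must satisfy; this is the exact analogue of \eqref{eq: 3.4}.

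Next, arguing by contradiction, I would assume that $\x$ lies in neither $\Q$ nor $\QP$ and write $\x=\e(\X)\X+\EKo\xo$ with unit vectors $\X\in\Q$, $\xo\in\QP$ and $\e(\X)\EKo\neq0$, exactly as in \eqref{xi}. By \cite[Equation~$(2.10)$]{JMPS} together with the hypothesis $\x\al=0$, this yields $A\X=\al\X$ and $A\xo=\al\xo$. The case $\al=0$ is disposed of directly: feeding $\Rx=0$ and these eigenrelations into the gradient formula \eqref{eq: 1.4} for $\al$ should force $\e(\X)\EKo=0$, a contradiction. So it remains to treat $\al\neq0$.

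For $\al\neq0$ I would substitute $X=\p\X$ into the identity $\Rx X=0$, just as $X=\p\X$ was substituted into \eqref{eq: 3.4} in the proof of Lemma~\ref{lemma 3.1}. Here I would use $\po\x=\e(\X)\po\X$ and $\p\X=-\EKo\po\X$, together with the fact --- obtained from \cite[Lemma~$2.2$]{JMPS} and $A\X=\al\X$ --- that $\p\X$ and $\po\X$ are both principal with common principal curvature $k=\frac{\al^{2}+4\e^{2}(\X)}{\al}$. After collecting the $\SN$ terms and the $A$-terms against this principal curvature, the identity should collapse to a scalar multiple of $\p\X$, and setting that scalar to zero should produce an equation of the form (a strictly positive factor)$\,\cdot\,\e^{2}(\X)\,\p\X=0$, in analogy with $4\e^{2}(\X)(8\e^{2}(\X)+\al^{2})\p\X=0$ in Lemma~\ref{lemma 3.1}. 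Since $\e(\X)\EKo\neq0$ forces $\p\X\neq0$, this is the desired contradiction, so $\x\in\Q$ or $\x\in\QP$.

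The main obstacle I anticipate is the bookkeeping in this last substitution. The formula \eqref{eq: 4.1} carries the three families of quaternionic terms $\EN(X)\xi_{\nu}$, $\EN(\p X)\PN\x$ and $\EN(\x)\PN\p X$, and each must be evaluated at $X=\p\X$ using the relations $\EN(\x)=\EKo\,\delta_{\nu1}$ (after choosing the basis so the $\QP$-component of $\x$ is $\xo$) and the action of $\p,\PN$ on $\X$ and $\xo$. Keeping these contractions consistent, and confirming that the $AX$- and $A^{2}X$-type contributions reassemble through the single curvature $k$ rather than spawning new eigendirections, is the delicate point; once that is verified the algebra is routine and parallels Lemma~\ref{lemma 3.1}.
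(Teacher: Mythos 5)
Your proposal is correct, but it departs from the paper's argument at the very first step, and in an interesting way. The paper never invokes self-adjointness of $\Rx$; it keeps the hypothesis in the form $\Rx(\Rx X)=0$, substitutes $X=\xo$ into \eqref{eq: 4.1} to get $\Rx\xo=\al^{2}\xo-\al^{2}\e(\xo)\x$, applies $\Rx$ once more to obtain $\al^{4}\xo-\al^{4}\e(\xo)\x=0$, and contracts with $\X$ to read off $\al^{4}\e(\xo)\e(\X)=0$; the resulting trichotomy gives the lemma at once. Your reduction of $\Rx^{2}=0$ to $\Rx=0$ via $0=g(\Rx^{2}X,X)=\Vert\Rx X\Vert^{2}$ is valid (the Jacobi operator of the induced curvature tensor does have the required symmetry) and is strictly stronger than what the paper extracts --- it would in fact also streamline Lemma~\ref{lemma 4.2} and the later Type~$(A)$/$(B)$ verifications. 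On the other hand, your handling of the case $\al\neq 0$ is heavier than necessary: once $\Rx=0$ is known, evaluating $\Rx\xo=\al^{2}\bigl(\xo-\e(\xo)\x\bigr)=0$ and contracting with $\X$ finishes immediately, exactly as the paper does one power of $\Rx$ higher; instead you route through the $X=\p\X$ substitution of Lemma~\ref{lemma 3.1}, which you only sketch. That sketch does close: carrying out the contractions with $\po\x=\e(\X)\po\X$, $\p\X=-\e(\xo)\po\X$ and $A\p\X=k\p\X$, $k=\frac{\al^{2}+4\e^{2}(\X)}{\al}$, yields $\Rx(\p\X)=\bigl(\al^{2}+8\e^{2}(\X)\bigr)\p\X$, whose vanishing contradicts $\e(\X)\e(\xo)\neq 0$; note the factor is $\al^{2}+8\e^{2}(\X)$ rather than the extra $\e^{2}(\X)$-multiple you predicted, but the conclusion is the same. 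The case $\al=0$ is handled identically in both arguments, via \eqref{eq: 1.4}.
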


\begin{proof}
Put $\xi$ satisfies \eqref{xi}
for some unit vectors $X_{0} \in \Q$ and $\xi_{1} \in \QP$.

\noindent Substituting $X=\xo$ in \eqref{eq: 4.1}, we have $\Rx({\xo})=\al^{2}\xo-\al^{2}\EKo\x$. This gives that
\begin{equation*}
\begin{split}
\Rx(\Rx \xo ) & = \Rx \big(\al^{2}\xo - \al^{2}\e(\xo)\x \big) \\
              & = \al^{2}\Rx \xo - \al^{2}\e(\xo)\Rx \x \\
              & = \al^{4}\xo - \al^{4}\EKo\x.
\end{split}
\end{equation*}
So, the condition of semi-parallel structure Jacobi operator implies
$$
\al^{4}\xo - \al^{4}\EKo\x=0.
$$
From this, taking the inner product with $\X \in \Q$, it gives $\al^{4}\EKo\e(\X)=0$. So we obtain the following three cases: $\al =0$, $\e(\X)=0$ or $\e(\xo)=0$.
When $\al$ is identically vanishing, by virtue of \eqref{eq: 1.4} we conclude that $\xi$ belongs to either $\Q$ or~$\QP$. For $\eta(\xi_{1})=0$,
then $\xi$ belongs to $\Q$ because of our notation~\eqref{xi}. Moreover, $\xi$ belongs to $\QP$ if $\E(X_0)=0$.
Accordingly, it completes the proof of our Lemma.
\end{proof}

According to Lemma~\ref{lemma 4.1}, we consider the case $\x \in \QP$.

\begin{lemma}\label{lemma 4.2}
Let $M$ be a Hopf hypersurface with \smsj \  in $\GBt$, $m \geq 3$. If the Reeb vector field $\xi$ belongs to the distribution~$\QP$, then $g(A\Q,\QP)=0$.
\end{lemma}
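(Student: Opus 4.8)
The goal $g(A\Q,\QP)=0$ is equivalent, by the symmetry of $A$, to showing $\eta_{2}(AW)=\eta_{3}(AW)=0$ for every $W\in\Q$. Indeed, taking $\x=\xi_{1}$ (which we may do since $\x\in\QP$), the Hopf condition forces $g(AW,\x)=g(W,A\x)=\al\,g(W,\x)=0$, so the $\xi_{1}$-component of $AW$ already vanishes and only the $\xi_{2}$- and $\xi_{3}$-components remain to be killed. Equivalently, writing $v_{2}$ and $v_{3}$ for the $\Q$-parts of $A\xi_{2}$ and $A\xi_{3}$, one has $g(v_{\mu},W)=\eta_{\mu}(AW)$, so the assertion is the same as $A\xi_{2},A\xi_{3}\in\QP$. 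The plan is to mirror the argument of Lemma~\ref{lemma 3.2}, replacing the shape operator by the structure Jacobi operator.

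First I would assemble the algebraic tools. Differentiating $\x=\xi_{1}$ gives the identity \eqref{eq: 2.7}, whose $\Q$-component yields $A\phi W=A\phi_{1}W$ for $W\in\Q$. Next, evaluating the explicit formula \eqref{eq: 4.1} for $R_{\xi}$ on $\xi_{2}$ and $\xi_{3}$, and using the quaternionic relations together with $\phi\xi_{\nu}=\phi_{\nu}\x$, I expect the clean expressions $\Rx\xi_{2}=2\xi_{2}+\al A\xi_{2}$ and $\Rx\xi_{3}=2\xi_{3}+\al A\xi_{3}$, the analogues of the relations on $\xi_{2},\xi_{3}$ used in Lemma~\ref{lemma 3.2}. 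Feeding these into the identity $\Rx(\Rx X)=0$ of \eqref{eq: 4.2} (with $X=\xi_{2},\xi_{3}$) produces the analogue of the curvature relations \eqref{eq: 3.11}; in particular the case $\al=0$ is disposed of immediately, since then $\Rx\xi_{2}=2\xi_{2}$ and \eqref{eq: 4.2} would force $4\xi_{2}=0$, a contradiction. Hence $\al\neq0$ from here on.

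The decisive step is to substitute $X=\xi_{2}$, $Y=\xi_{3}$ and $Z=W\in\Q$ into the full semi-parallel identity \eqref{eq 4.1}, expanding the outer curvature operator $R(\xi_{2},\xi_{3})$ by the Gauss equation and each occurrence of the structure Jacobi operator by \eqref{eq: 4.1}. Because $\xi_{2},\xi_{3}\in\QP$ and $W\in\Q$, a large number of the Gauss terms drop out, and after inserting the auxiliary identities $A\phi W=A\phi_{1}W$, $\Rx\xi_{\mu}=2\xi_{\mu}+\al A\xi_{\mu}$, and the relations coming from \eqref{eq: 4.2}, I expect the $\Q$-components on the two sides to cancel and the $\QP$-components to collapse to a single relation of the shape $\eta_{3}(AW)\xi_{2}=\eta_{2}(AW)\xi_{3}$, exactly as at the end of Lemma~\ref{lemma 3.2}. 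Since $\xi_{2}$ and $\xi_{3}$ are linearly independent this forces $\eta_{2}(AW)=\eta_{3}(AW)=0$ for all $W\in\Q$, which is precisely $g(A\Q,\QP)=0$.

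The main obstacle is the bookkeeping in this last substitution: both $\Rx W$ and $R(\xi_{2},\xi_{3})W$ split into a $\Q$-part and a $\QP$-part, and one must expand $R(\xi_{2},\xi_{3})(\Rx W)$ and $\Rx\!\big(R(\xi_{2},\xi_{3})W\big)$ completely through the Gauss equation while keeping the two parts separate. The difficulty is purely computational---tracking the quaternionic sign conventions in terms such as $\phi_{\nu}\phi W$ and $\eta_{\nu}(\phi W)$ and verifying that every $\Q$-valued contribution indeed cancels---so the crux is to organize the expansion so that only the two-dimensional $\QP$-relation survives. I would also double-check that the scalar coefficient in front of the surviving relation is nonzero (this is where $\al\neq0$ and $m\geq3$ enter), so as to be certain the relation genuinely annihilates $\eta_{2}(AW)$ and $\eta_{3}(AW)$ rather than holding vacuously.
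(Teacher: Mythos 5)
Your setup and your treatment of the case $\al=0$ coincide with the paper's: putting $\x=\xo$, deriving $\Rx\xtw=2\xtw+\al A\xtw$ and $\Rx\xth=2\xth+\al A\xth$ from \eqref{eq: 4.1}, and using $\Rx(\Rx\xtw)=4\xtw\neq 0$ to rule out $\al=0$ is exactly what the paper does. Where you diverge is the decisive step, and that is also where your argument has a genuine hole: you propose to expand the full semi-parallel identity \eqref{eq 4.1} with $X=\xtw$, $Y=\xth$, $Z=W$ through the Gauss equation, in analogy with Lemma~\ref{lemma 3.2}, and you only \emph{expect} the outcome to collapse to $\eth(AW)\xtw=\etw(AW)\xth$ with a nonzero coefficient. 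Nothing in your write-up establishes that cancellation; the analogy with the shape-operator case is not automatic, since the auxiliary quadratic relations are different here ($\al^{2}A^{2}\xtw=-4\al A\xtw-4\xtw$ rather than \eqref{eq: 3.11}), and you yourself flag that the surviving coefficient might vanish. As it stands the proof is a plan whose crux is unverified.

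The irony is that you already hold all the ingredients for a much shorter finish, which is the route the paper takes and which never touches the heavy $X=\xtw$, $Y=\xth$ expansion. From \eqref{eq: 4.2} applied to $W\in\Q$, using $\Rx W=W+\po\p W+\al AW$ together with \eqref{differ}, one gets $2AW+\al A^{2}W=0$, hence
\begin{equation*}
\al\,\e_{\mu}(A^{2}W)=-2\,\e_{\mu}(AW)\qquad(\mu=2,3).
\end{equation*}
On the other hand, \eqref{eq: 4.2} applied to $\x_{\mu}$ gives $4\x_{\mu}+4\al A\x_{\mu}+\al^{2}A^{2}\x_{\mu}=0$, and taking the inner product with $W$ yields
\begin{equation*}
\al\,\e_{\mu}(A^{2}W)=-4\,\e_{\mu}(AW).
\end{equation*}
Comparing the two forces $\e_{\mu}(AW)=0$ for $\mu=2,3$, i.e.\ $g(A\Q,\QP)=0$. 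I recommend you replace your unexecuted Gauss-equation expansion with this coefficient comparison: it uses only the consequence $\Rx\circ\Rx=0$ of semi-parallelism, and it turns the step you identified as the ``main obstacle'' into two inner products.
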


\begin{proof}
We may put $\xi=\xi_{1}$, because $\xi \in \QP$. Differentiating $\xi=\xi_{1}$ for any direction~$X$ on $M$, we obtain
\begin{equation}\label{differ}
\left\{
\begin{aligned}
& q_{2}(X)=2g(AX, \xt), \ q_{3}(X)=2g(AX, \xh)\ \text{and} \   \\
& AX=\e(AX)\x + 2g(AX, \xt)\xt + 2g(AX,\xh)\xh - \p \po AX \\
& \ \ \text{(or} \ AX= \e(X)A \x + 2\e_{2}(X) A \xt + 2 \e_{3}(X) A\xh - A \p \po X \text{)}.
\end{aligned}
\right.
\end{equation}

Putting $X=\xtw$ into \eqref{eq: 4.1}, it follows that $\Rx(\xtw)=2\xtw+\al A\xtw$. If the smooth function~$\al$ vanishes, it makes a contradiction. In fact, from \eqref{eq: 4.2} we see that $\Rx(\Rx \xtw)=4\xtw=0$. Thus we may consider that the smooth function $\al$ is non-vanishing.

\vskip 3pt

On the other hand, it follows that for any $W\in\Q$ the equation~\eqref{eq: 4.1} becomes
\begin{equation*}\label{eq: 4.4}
\Rx(W)=W+\po\p W+\al AW,
\end{equation*}
from this, together with the semi-parallelism of $R_{\xi}$, it follows that
\BE\label{eq: 4.5}
\begin{split}
0 & = \Rx(\Rx W)\\
  & = 2\al AW+2\al \eth(AW)\xth+2\al\etw(AW)\xtw-\al\po\p AW\\
    & \quad \ \ +\al^{2}A^{2}W+\al A\po\p W.
\end{split}
\EE
From \eqref{differ} and $\al \neq 0$, it follows that $2AW+\al A^{2}W=0$, where $AW=-A\po\p W$ for any tangent vector field $W \in \Q$.
 Taking the inner product with $\xtw$ and $\xth$, respectively, it becomes
\begin{equation}\label{eq: 4.7}
\al \etw(A^{2}W)  = -2\etw(AW), \quad   \al \eth(A^{2}W)  = -2\eth(AW).
\end{equation}

Moreover, according to \eqref{eq: 4.1}, we also have $\Rx(A\xtw)=2A\xtw+\al A^{2}\xtw$, which induces that
\begin{equation*}
\begin{split}
0=\Rx(\Rx \xtw )&=\Rx(2\xtw+\al A\xtw)\\
              &=2\Rx(\xtw)+\al\Rx( A\xtw)\\
              &=4\xtw+4\al A\xtw+\al^{2}A^{2}\xtw.
\end{split}
\end{equation*}
Again taking the inner product with $W\in\Q$ and using the fact $\al \neq 0$, we have
\begin{equation}\label{eq: 4.8}
\al \etw(A^{2}W)=-4\etw(AW).
\end{equation}
From this and \eqref{eq: 4.7}, we obtain $\etw(AW)=0$ for any tangent vector field $W \in \Q$.

Similarly, from \eqref{eq: 4.1} we get $\Rx\xth=2\xth+\al A\xth$ and $\Rx(A\xth)=2A\xth+\al A^{2}\xth$, which gives
\BE\label{eq: 4.10}
\begin{split}
0=\Rx(\Rx \xth )&=\Rx(2\xth+\al A\xth)\\
&=4\xth+4\al A\xth+\al^{2}A^{2}\xth.
\end{split}
\EE
From this, taking the inner product with $W\in\Q$ and using $\al \neq 0$, we have $4\eth(AX)+\al \eth(A^{2}X)=0$.
Combining this and \eqref{eq: 4.7}, we get also $\eth(AW)=0$ for any $W \in \Q$.
Until now, we have proven if $M$ satisfies our assumtpions, then the distribution $\QP$ is invariant under the shape operator,
that is, $g(A\Q,\QP)=0$. This gives a complete proof of our lemma.
\end{proof}

From this lemma and Theorem~$\rm A$ given by Berndt and Suh~\cite{BS1}, we see that a Hopf hypersurface $M$ satisfying the assumptions in Lemma~\ref{lemma 4.2}
is locally congruent to a model space of Type~$(A)$. Now, if  $\x$ belongs to $\Q$, then by virtue of Theorem~$\rm B$ a Hopf hypersurface $M$ with semi-parallel structure Jacobi operator is locally congruent to a real hypersurface of Type~$(B)$ in $\GBt$. Hence we conclude that \emph{let $M$ be a Hopf hypersurface in $\GBt$. If $M$ satisfies \eqref{eq 4.1} and $\x \al = 0$, then~$M$ is a model space of Type~$(A)$ or $(B)$.}

\vskip 3pt

From such a point of view, let us consider the converse problem. More precisely,
 we check whether the structure Jacobi operator $\Rx$ of a model space of Type~$(A)$ (or of Type~$(B)$, resp.) satisfies the semi-parallel condition~\eqref{eq 4.1}.

\vskip 3pt

In order to check our problem for a model space $M_{A}$,
we suppose that $M_{A}$ has the semi-parallel structure Jacobi operator. By virtue of Proposition~$\rm 3$ in \cite{BS1}, we see that $\x=\xo\in T_{\al}$ and $\x_{j} \in T_{\beta}$ for $j=2,3$. From this, the semi-parallel condition for $\Rx$ becomes
\begin{equation*}\label{eq: 4.11}
\begin{split}
\Rx(\Rx \xtw )&=4\xtw+4\al\beta\xtw+\al^{2}\beta^{2}\xtw \\
              &=(\al\beta+2)^{2}\xtw=0
\end{split}
\end{equation*}
when we put $X=\xtw$ in \eqref{eq: 4.2}. It implies $(\al\beta+2)=0$. But since $\al = \sqrt{8} \cot (\sqrt{8}r)$ and $\beta =\sqrt{2} \cot (\sqrt{2}r)$, we obtain $(\al\beta + 2) = 2\cot^{2}(\sqrt{2}r) \neq 0$ for $r \in (0, \pi / 2\sqrt{2})$. Thus it gives us a contradiction.

\vskip 3pt

In the sequel, we check whether $\Rx$ of a model space $M_{B}$ of Type~$(B)$ is semi-parallel.
To do this, we assume that $\Rx$ of $M_{B}$ satisfies the condition~\eqref{eq 4.1}. On a tangent vector space $T_{x}M_{B}$ at any point $x \in M_{B}$, the Reeb vector $\x$ belongs to $\Q$. From this and~\eqref{eq: 4.1}, the condition of \eqref{eq 4.1} implies that for $X=\xtw \in T_{\beta}$
\begin{equation*}
\Rx(\Rx \xtw )=\al^{2}\beta^{2}\xtw=0.
\end{equation*}
On the other hand, from \cite[Proposition~$\rm 2$]{BS1}, since $\alpha = -2 \tan(2r)$ and $\beta=2 \cot (2r)$ where $r \in (0, \pi / 4)$
on~$M_B$, we get $(\al\beta)^{2}=16$.
So, we consequently see that the tangent vector $\xtw$ must be zero, which gives a contradiction.

\vskip 5pt
\noindent Therefore we assert:
\begin{remark}\label{remark 2}
\rm The structure Jacobi operator $\Rx$ of a model space of Type~$(A)$ nor Type~$(B)$ in
$\GBt$ does not satisfy the semi-parallelism condition.
\end{remark}
Summing up these discussions, we complete the proof of our Theorem~$2$ given in the introduction.\hspace{9.8cm}$\Box$

\vspace{0.15in}

\section{Semi-parallel normal Jacobi operator}\label{section 3}
\setcounter{equation}{0}
\renewcommand{\theequation}{3.\arabic{equation}}
\vspace{0.13in}

Now, we observe a Hopf hypersurface $M$ in $\GBt$, $m \geq 3$, with semi-parallel normal Jacobi operator, that is, the normal Jacobi operator $\bar R_{N}$ of $M$ satisfies
\begin{equation*}
(R(X,Y)\bar R_{N})Z=0
\end{equation*}
for all tangent vector fields $X,Y,Z$ on $M$.

\vskip 3pt

In order to prove Theorem~3 mentioned in Introduction, let us consider the case that $M$ has vanishing geodesic Reeb flow.

\begin{lemma}\label{lemma 5.1}
Let $M$ be a real hypersurface in $\GBt$ with vanishing geodesic Reeb flow. If the normal Jacobi operator $\bar R_{N}$ of $M$ is semi-parallel, then $M$ is locally congruent to a model space of Type~$(A)$ or Type~$(B)$.
\end{lemma}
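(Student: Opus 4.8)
The plan is to transplant the three-step scheme already used for Theorems~$1$ and~$2$ to the operator $T=\RN$. By \eqref{mcon 02} the semi-parallelism of the normal Jacobi operator reads
\BEN
\RN\big(R(X,Y)Z\big)=R(X,Y)\big(\RN Z\big),
\EEN
while the hypothesis of vanishing geodesic Reeb flow means $M$ is Hopf with $A\x=\al\x$ and $\al=0$, hence $A\x=0$. First I would record the explicit form of $\RN$ coming from the curvature tensor of $\GBt$ (as recorded in \cite{PT}), insert it together with the Gauss equation for the intrinsic curvature $R$ into the identity above, and simplify every occurrence of $A\x$ to zero.

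The first objective is to show that $\x$ lies in $\Q$ or in $\QP$. Writing $\x=\e(\X)\X+\e(\xo)\xo$ as in \eqref{xi}, with $\X\in\Q$, $\xo\in\QP$ and $\e(\X)\e(\xo)\neq0$, I would substitute $\xo$ (and then $\X$) into the specialized identity and pair the outcome against $\X$; following the mechanism of Lemma~\ref{lemma 4.1}, the resulting scalar equation should carry $\e(\X)\e(\xo)$ as its only possibly vanishing factor, contradicting the assumption. Since the standing hypothesis is exactly vanishing geodesic Reeb flow, this is also the point where \cite[Lemma~3.1]{LKS02} can be invoked to deliver the dichotomy $\x\in\Q$ or $\x\in\QP$ directly.

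With the dichotomy secured I would split into two cases. If $\x\in\QP$, I put $\x=\xo$ and, along the lines of Lemma~\ref{lemma 4.2}, restrict the identity to $Z=W\in\Q$ and pair it against $\xtw$ and $\xth$; this should yield $g(A\Q,\QP)=0$, so that both $[\x]$ and $\QP$ are invariant under $A$ and Theorem~A forces $M$ to be of Type~$(A)$. If instead $\x\in\Q$, then Theorem~B immediately identifies $M$ as a real hypersurface of Type~$(B)$, which completes the classification.

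The step I expect to be the main obstacle is the algebraic weight of $\RN$: unlike $A$ or $\Rx$, it carries the quaternionic cross terms built from $\p_\nu\p$ together with the coupling coefficients $\EN(\x)$, so expanding $\RN(R(X,Y)Z)=R(X,Y)(\RN Z)$ and extracting clean scalar relations will demand careful bookkeeping. The condition $\al=0$ is what collapses the shape-operator contributions; accordingly the delicate part is to check that enough $\QP$-components survive to force $g(A\Q,\QP)=0$ in the case $\x\in\QP$, whereas the case $\x\in\Q$ is handed to Theorem~B with no further computation.
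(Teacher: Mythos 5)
Your overall skeleton agrees with the paper's: the dichotomy $\x\in\Q$ or $\x\in\QP$ follows directly from $\al=0$ (the paper reads it off the gradient formula \eqref{eq: 1.4} rather than from a Lemma~\ref{lemma 4.1}-type computation, and your remark that no semi-parallelism is needed for this step is correct); the case $\x\in\Q$ is handed to Theorem~B; and the case $\x\in\QP$ is meant to produce $\QP$-invariance and then Theorem~A. The gap is in your mechanism for the last case. You propose to restrict the semi-parallel identity to $Z=W\in\Q$ and pair against $\xtw$ and $\xth$ to extract $g(A\Q,\QP)=0$. But $\RN$ itself contains no shape-operator terms, so $A$ enters the identity only through the Gauss equation for $R$, and in the specialization relevant here every term carrying a $\QP$-component of $AW$ appears with a factor of $\al$ (this is visible in \eqref{eq: 3-1}, where the right-hand side contributions $2\al\etw(AW)\xtw+2\al\eth(AW)\xth$ and $\al\po\p AW$ all die when $\al=0$). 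Pairing against $\xtw,\xth$ therefore returns $0=0$: the semi-parallel condition alone, under $\al=0$, carries no information about $g(A\Q,\QP)$.

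What survives when $\al=0$ is a purely structural relation, $7W-6\po\p W=\po\p(\po\p W)$ for $W\in\Q$, and since $\po\p(\po\p W)=W$ on $\Q$ this forces $\po\p W=W$. The extrinsic information then comes from a different source: differentiating $\x=\xo$ gives the identity \eqref{differ}, which for $W\in\Q$ reads $AW=-A\p\po W$; combined with $\po\p W=W$ (hence $\p\po W=W$ on $\Q$) this yields $AW=0$, which is much stronger than $g(A\Q,\QP)=0$ and puts you in the setting of Theorem~A. So the step you flagged as ``the delicate part'' is not merely delicate: the route you sketch for it cannot produce the needed invariance, and the missing ingredient is the covariant-derivative identity for $\x=\xo$, not a further exploitation of the curvature identity.
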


\begin{proof}
When the function ${\alpha}=g(A{\xi},{\xi})$ identically vanishes, it can be seen directly by \eqref{eq: 1.4} that $\xi$ can be divided into $\xi\in{\Q}$ or
$\xi\in\QP$. Then we first consider the case that $\xi$ belongs to $\Q$. By virtue of Theorem~$\rm B$, we get that $M$ is locally congruent to a model space of Type~$(B)$.

Next, we consider the case $\xi\in\QP$.
Substitution of the previous two relations in \cite[$(4.17)$]{PT} gives
\begin{equation}\label{eq: 3-1}
\begin{split}
&7W+7\al AW-6\po\p W \\
& \quad \quad =2\al \etw(AW)\xtw+2\al \eth(AW)\xth+\po\p(\po\p W)-\al \po\p AW.
\end{split}
\end{equation}
Since $\al=0$, it follows that $7W-6\po\p W=\po\p(\po\p W)$ for any  $W\in\Q$. Moreover, from ${\p}{\p}_{\nu}X ={\p}_{\nu}{\p}X+{\eta}_{\nu}(X){\xi}-{\eta}(X)
{\xi}_{\nu}$, $\nu=1,2,3$, we obtain $ {\p}{\p}_{1}({\p}{\p}_{1}W)
=W$. Thus \eqref{eq: 3-1} implies $\po\p W=W$. It implies $AW=0$ for any $W\in\Q$, together with~\eqref{differ}. It gives us a complete proof for $\al=0$.
\end{proof}

It remains to be checked if the normal Jacobi operator $\RN$ of a model space $M_{A}$ or $M_{B}$ satisfy the semi-parallelism condition.
For $\xi\in\QP$, we easily get $2\xi=0$ from \cite[Equations~$(5.2)$ and $(5.3)$]{PT}.
For $\xi\in\Q$, as we know $\alpha = -2 \tan(2r)$ with $r \in (0, \pi/4)$ on a real hypersurface of Type~$(B)$, $\al$ never vanishes (see \cite[Proposition~$2$]{BS1}).
So, neither the normal Jacobi operator $\RN$ of $M_{A}$ nor $M_{B}$ does not satisfy the semi-parallelism condition.
 Thus we get the following:

\begin{corollary}\label{remark 3}
Let $M$ be a real hypersurface in $\GBt$, $m \geq 3$, with vanishing geodesic Reeb flow. Then there does not exist any Hopf hypersurface if the normal Jacobi operator $\bar R_{N}$ of $M$ satisfies the condition of semi-parallelism.
\end{corollary}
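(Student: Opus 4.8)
The plan is to derive the corollary directly from Lemma~\ref{lemma 5.1} and then dispose of the two surviving candidates by a short computation on the model spaces. First I would observe that \emph{vanishing geodesic Reeb flow} is precisely the requirement that the smooth function $\al=g(A\x,\x)$ vanish identically on $M$, which is exactly the hypothesis of Lemma~\ref{lemma 5.1}. Assuming for contradiction that a Hopf hypersurface $M$ in $\GBt$ with $\al=0$ and semi-parallel normal Jacobi operator $\RN$ exists, Lemma~\ref{lemma 5.1} forces $M$ to be locally congruent to a model space of Type~$(A)$ or of Type~$(B)$. It therefore suffices to show that neither model space is compatible with both hypotheses.

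Next I would run the case analysis. For Type~$(A)$ the Reeb vector field lies in $\QP$; feeding the explicit eigenspace decomposition of \cite[Proposition~$3$]{BS1} into the semi-parallel relation for $\RN$, recorded in \cite[Equations~$(5.2)$ and $(5.3)$]{PT}, I expect the identity to collapse to $2\x=0$, which is impossible. For Type~$(B)$ the Reeb vector field lies in $\Q$ and the relevant principal curvature equals $\al=-2\tan(2r)$ with $r\in(0,\pi/4)$, which never vanishes by \cite[Proposition~$2$]{BS1}; this already contradicts the standing hypothesis $\al=0$, so Type~$(B)$ is excluded without even appealing to the semi-parallelism. Since both candidates fail, no such Hopf hypersurface can exist, which is the assertion.

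I expect the genuine difficulty to reside in Lemma~\ref{lemma 5.1} rather than in the corollary: there one must exploit the semi-parallelism of $\RN$ to reach the operator identity $7W-6\po\p W=\po\p(\po\p W)$ on $\Q$ and then use $\po\p(\po\p W)=W$ to force $\po\p W=W$, hence $AW=0$, so that Theorem~$\rm A$ becomes applicable. Granting that reduction, the corollary is essentially immediate, and the only step requiring genuine care is the Type~$(A)$ substitution; the Type~$(B)$ case is settled at once by the non-vanishing of $\al$.
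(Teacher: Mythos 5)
Your proposal is correct and follows essentially the same route as the paper: invoke Lemma~\ref{lemma 5.1} to reduce to the model spaces, then eliminate Type~$(A)$ via the $2\xi=0$ contradiction from \cite[Equations~$(5.2)$ and $(5.3)$]{PT} and Type~$(B)$ via the non-vanishing of $\al=-2\tan(2r)$, exactly as in the text.
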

\vskip 5pt
Combining Theorem~$\rm C$ and Corollary~\ref{remark 3}, we give a complete proof of Theorem~$3$ in the introduction. \hspace{9.3cm}$\Box$

\vskip 15pt

\noindent {\bf Acknowledgements.}\quad The authors would like to express their deep gratitude to Professors Y.J. Suh and J.D. P\'{e}rez for their suggestions to solve this problem and nice comments with their best effort.

\vspace{0.15in}


\end{document}